\def\Z{\mathbb Z}
\def\G{\mathbb G}
\def\P{\mathbb P}
\def\Q{\mathbb Q}
\def\R{\mathbb R}
\def\HS{\mathscr V}
\def\L{\Lambda}
\def\x{\bm{x}}
\def\e{\bm{e}}
\def\m{\bm{m}}
\def\b{\bm{b}}
\def\y{\bm{y}}
\def\t{\bm{t}}
\def\B{\bm{B}}
\DeclareMathOperator{\disc}{disc}
\DeclareMathOperator{\diag}{diag}
\DeclareMathOperator{\lcm}{lcm}
\newtheorem{lemma}{Lemma}[section]
\newtheorem{theorem}{Theorem}[section]
\theoremstyle{definition}
\newtheorem{remark}{Remark}[section]
\numberwithin{equation}{section}
\title[Rational points on quadrics]{A uniform estimate for the density of rational points on quadrics}      
\author{F\'elicien Comtat} 
\subjclass[2010]{11D45 (11D09, 11E12)}
\begin{document} 

\begin{abstract}
This paper is concerned with the density of rational points of bounded height lying on a variety defined by an integral quadratic form $Q$.
In the case of four variables, we give an estimate that does not depend on the coefficients of $Q$.
For more variables, a similar estimate still holds with the restriction that we only count points which do not lie on $\Q$-lines.
\end{abstract}

\maketitle

\section{Introduction}

Given a non-singular quadratic form $Q \in \Z[x_1, \dots ,x_n]$, we are interested in the asymptotic distribution of rational points lying on the projective hypersurface $\HS$ defined by $Q=0$.
More precisely,
define $N(Q, B)$ be the number of primitive points $\x \in \Z^n$ with $|x_i| \le B$ for each $i$ and such that $Q(\x)=0$.

In the case $n=4$, it can be deduced from the work of Heath-Brown \cite[Theorems 6,7]{CircleMethod} that
$$
N(Q,B) \sim \left\{
    \begin{array}{ll}
       c_Q B^2& \mbox{if } \disc(Q) \text{ is not a square,}\\
       c_Q B^2 \log(B) & \mbox{otherwise,}
    \end{array}
\right.
$$
where $c_Q$ is a product of local densities that depends on $Q$. However, in some applications one may be interested in estimates which are uniform with respect to the coefficients of $Q$. 
In this spirit, if $Q$ is a quadratic form of rank at least $3$ in $n$ variables, then for any $\epsilon > 0$ we have  \cite[Theorem 2]{density}
$$N(Q,B) \ll_\epsilon B^{2+\epsilon},$$
where the implied constant only depends on $\epsilon$. In the case of four variables, we show that this $\epsilon$ can be removed, in the following sense.

\begin{theorem}\label{bhb}
Let $Q$ be a non-singular integral quadratic form in four variables. Then
$$
N(Q,B) \ll \left\{
    \begin{array}{ll}
       B^2& \mbox{if } \disc(Q) \text{ is not a square,}\\
       B^2 \log(B) & \mbox{otherwise,}
    \end{array}
\right.
$$
the implied constant being absolute. 
\end{theorem}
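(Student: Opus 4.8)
\emph{Proof proposal.}
If $Q=0$ has no primitive solution then $N(Q,B)=0$, so I may assume $\HS(\Q)\ne\emptyset$; I may also take $Q$ primitive and $B$ large. Put $\Delta=\disc Q$. Then $\HS\subset\P^3$ is a smooth quadric surface, and over $\bar\Q$ it carries exactly two rulings by lines, which $\mathrm{Gal}(\bar\Q/\Q)$ interchanges through the quadratic character of $\Q(\sqrt\Delta)$. So if $\Delta$ is a square both rulings are defined over $\Q$, whence (as $\HS$ has a rational point) $\HS$ is $\Q$-isomorphic to $\P^1\times\P^1$ and every rational point of $\HS$ lies on one $\Q$-line from each ruling; if $\Delta$ is not a square then $\HS$ contains no $\Q$-line and is a twist of $\P^1\times\P^1$, namely the Weil restriction $R_{K/\Q}\P^1$ with $K=\Q(\sqrt\Delta)$, of Picard rank $1$. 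In either case the plan is to replace the counting of points on $\HS\subset\P^3$ by a lattice-point count on the associated parametrising space — a bilinear parametrisation in the split case, its analogue over $K$ in the non-split case — on which no divisor-type factors arise, and then to arrange that the dependence on $\Delta$ cancels between the region being counted and the ambient lattice.

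In the split case, fixing the two rulings, every primitive zero of $Q$ is $\x=\B(\a,\b)$ for primitive $\a,\b\in\Z^2$ (taken up to sign), with $\B$ a quadruple of integral bilinear forms; the map is finite-to-one up to a bounded constant, and the content of $\B(\a,\b)$ and the covolume of the attached lattice are bounded by a fixed power of $\Delta$. Thus, up to Möbius inversion handling the coprimality,
\[
N(Q,B)\ \asymp\ \#\{(\a,\b):\ \a,\b\ \text{primitive},\ |\B(\a,\b)|_\infty\le B\}.
\]
I would evaluate this by fixing $\a$, counting the primitive $\b$ in the parallelepiped $\{|\B(\a,\b)|_\infty\le B\}$ — attached to a plane lattice whose covolume $\det(\a)$ is quadratic in $\a$ — via Minkowski's estimate, and summing over $\a$; the main term of the inner count is $\asymp B^2/\det(\a)$, and $\sum_\a B^2/\det(\a)$ is a harmonic sum producing the factor $\log B$, with the power of $\Delta$ in $\det(\a)$ cancelled by the power of $\Delta$ governing the range of summation over $\a$, while the error terms sum to $O(B^2)$. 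In the non-split case one uses the parametrisation by $\P^1(K)$: a rational point of $\HS$ is $(\alpha:\beta)$ with $\alpha,\beta\in\mathcal O_K$ coprime, the height $H(\x)$ becomes a positive-definite quaternary form in the $\Z$-coordinates of $(\alpha,\beta)$ whose coefficients and discriminant are powers of $\Delta$, and $N(Q,B)$ is the number of points of the lattice $\mathcal O_K^2\cong\Z^4$ in the corresponding region. Since $K$ is a field there is no rational line of degeneracy, the region is ``balanced'', and the count is $\ll B^2$: the covolume $|\disc K|$ of $\mathcal O_K^2$ cancels the scale of the region, and no logarithm appears because the Picard rank has dropped to $1$.

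The crux is the uniformity in $\Delta$. The parametrising forms genuinely have coefficients as large as a power of $\Delta$ — already for $x_1^2+x_2^2-x_3^2-N^2x_4^2=0$ the two rulings have height $\asymp N\asymp\Delta^{1/2}$ — so any argument that merely bounds the region loses a power of $\Delta$; recovering an \emph{absolute} implied constant forces one to compute the relevant volumes and covolumes essentially exactly, while keeping sufficiently precise track of the coprimality conditions and the boundary contributions so as not to introduce a spurious $\log B$ in the anisotropic case. I expect this volume/covolume bookkeeping, together with the verification that the Minkowski error terms are of genuinely lower order uniformly in $\Delta$, to be the technically demanding part; the bound $N(Q,B)\ll_\epsilon B^{2+\epsilon}$ of \cite{density} can serve as a safety net to dispose of the range where $B$ is small compared with a fixed power of $\Delta$ and the parametrisation count is least transparent.
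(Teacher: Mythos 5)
Your strategy --- parametrising the split quadric by $\P^1\times\P^1$ via bilinear forms and the non-split one by the Weil restriction of $\P^1$ over $K=\Q(\sqrt{\Delta})$ --- is a genuinely different route from the paper (which combines Theorem \ref{1.2}, proved by lattice decompositions \`a la Browning--Heath-Brown, with a separate count of the $\Q$-lines using Walsh's bound on the Fano variety of lines). But as written the proposal has real gaps, and they sit exactly at the point you yourself identify as the crux. First, the uniformity in $\Delta$ is asserted, not established: the claims that ``the power of $\Delta$ in $\det(\a)$ is cancelled by the power of $\Delta$ governing the range of summation'', that ``the error terms sum to $O(B^2)$'', and that the content of $\B(\a,\b)$ can be removed by M\"obius inversion at no cost, are each nontrivial. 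The content of $\B(\a,\b)$ for primitive $\a,\b$ ranges over divisors of a quantity of size a power of $\Delta$, and summing over these divisors already costs a factor $\Delta^{o(1)}$ unless one does something cleverer; the Minkowski error term for counting $\b$ in a skew planar lattice is governed by the first successive minimum, not the covolume, and summing it over $\a$ uniformly in $Q$ is not automatic. Second, and more seriously, in the non-split case the map from coprime pairs $(\alpha,\beta)\in\mathcal O_K^2$ to points of $\HS(\Q)$ has fibres that are orbits of the unit group (and representatives indexed by the class group), so the constant in your lattice-point count inevitably involves $h_K R_K/\sqrt{|\disc K|}$, i.e.\ $L(1,\chi_\Delta)$, which is only $O(\log|\Delta|)$. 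Since after reduction one only knows $|\Delta|\ll B^{O(1)}$, this threatens a bound $B^2\log B$ in precisely the case where the theorem demands $B^2$; your sketch does not address the units at all.

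Finally, the proposed safety net fails: if $B\le|\Delta|^{c}$, the bound $N(Q,B)\ll_\epsilon B^{2+\epsilon}$ gives $B^2\cdot B^\epsilon\le B^2|\Delta|^{c\epsilon}$, which is not $\ll B^2$ with an absolute constant, so this does not dispose of the range where $B$ is small relative to $\Delta$. For comparison, the paper avoids all of these issues by never writing down a parametrisation: it counts points off the lines by Theorem \ref{1.2} (getting $O(B^2)$ with no dependence on $Q$), shows the relevant lines number $O(B^2)$ and lie on $O(B^2)$ planes arising from Lemma \ref{key}, and then bounds $\sum_{n\le N}\bigl(1+B^2/H(L_n)\bigr)\ll B^2\log B$ using Walsh's uniform $O(H)$ bound for lines of height at most $H$; the $\log B$ appears only when $\Q$-lines exist, i.e.\ only when $\disc(Q)$ is a square. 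To salvage your approach you would need, at a minimum, an explicit treatment of the content sum, of the boundary terms, and of the unit/class group contribution showing the $L(1,\chi_\Delta)$ factor genuinely cancels against the archimedean volume --- none of which is carried out here.
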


Theorem \ref{bhb} confirms a conjecture made by Browning and Heath-Brown in \cite{2018}. In fact, the main result in \cite{2018} gives an estimate for 
$N(Q,B)$ which actually gets sharper than Theorem \ref{bhb} for suitably generic quaternary quadratic forms (namely those for which the discriminant is close to being square-free
and of order $\| Q \|$).
 
When $n=4$ and $\HS(\Q) \neq \varnothing$, the assumption that the surface $\HS$ contains $\Q$-lines is equivalent to $\disc(Q)$ being a square. 
Thus one may estimate separately the contribution of points which do not lie on such lines and the contribution from the $\Q$-lines. 

To this end, it is also useful to have precise estimates for the distribution of rational points in arbitrary boxes instead of hypercubes only.
Namely, for $B_1,\dots , B_n \ge 1$, let $N(Q,\B)$ be the number of primitive points on the hypersurface $\HS$ with $|x_i| \le B_i$ for each $i$,
and let $N_1(Q,\B)$ be the number of points $\x$ counted by $N(Q, \B)$ with the additional condition that $\x$ does not lie on a $\Q$-line included in $\HS$.
Work of Browning and Heath-Brown \cite{BigArticle} gives precise estimates for these counting functions. 
By revisiting their argument we shall prove the following result.

\begin{theorem}\label{1.2}
Let $n \ge 3$, and $B_1, \dots , B_n \ge 1$ and define $V= \prod_{i=1}^n B_i$. 
Let $Q \in \Z[x_1, \dots, x_n]$ be any quadratic form.
Then $N_1(Q,\B) \ll V^{\frac{n-2}{n}}$, where the implied constant depends only on $n$.
\end{theorem}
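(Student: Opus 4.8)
The plan is to reduce the statement, by elementary considerations on the quadratic form, to a uniform bound for the full count $N(Q,\B)$ on a non-singular quadric containing no $\Q$-line, and then to prove that bound by revisiting the determinant method of Heath-Brown as employed in \cite{BigArticle}. We relabel the variables so that $B_1 \le \dots \le B_n$, assume $V$ is large, and record that $B_n \ge V^{1/n}$.

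\emph{Reductions.} If $Q$ is singular, then over $\Q$ it depends on $r < n$ variables, so $\HS$ is a cone, with a rational vertex, over a non-singular quadric in fewer variables; consequently every rational point of $\HS$ lies either in the vertex or on the $\Q$-line joining a rational vertex point to its image on the base quadric. Since $n \ge 3$, this places all but $O(1)$ of the relevant points on positive-dimensional $\Q$-linear subspaces of $\HS$, so $N_1(Q,\B) \ll 1$. If $Q$ is non-singular and $\HS$ contains a $\Q$-line $L$, then $L$ carries rational points, and by Witt's cancellation theorem the existence of a $\Q$-line of $\HS$ through a rational point $P$ of $\HS$ depends only on $Q$, not on $P$ --- the relevant invariant being the Witt complement of $Q$; hence such a line passes through every rational point of $\HS$, and $N_1(Q,\B) = 0$. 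It remains to show that $N(Q,\B) \ll_n V^{(n-2)/n}$ when $Q$ is non-singular, $\HS$ contains no $\Q$-line, and $\HS(\Q) \ne \varnothing$ --- for $n = 3$ this is just the number of rational points on a non-singular conic possessing a rational point.

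\emph{Main argument.} I would establish this by a descent on the dimension driven by Heath-Brown's $p$-adic determinant method. Split the relevant points according to their reduction modulo an auxiliary prime $p$ of size roughly $V^{1/n}$; since a quadric over $\mathbb{F}_p$ has $\ll_n p^{n-2}$ points, there are $\ll_n p^{n-2}$ residue classes, and it suffices to bound by $O_n(1)$ the number of counted points in each class. Within a class, all points reduce to one point of $\P^{n-1}(\mathbb{F}_p)$, so the matrix of monomial evaluations at these points has rank at most $1$ modulo $p$; the determinant estimate then confines the class to an auxiliary hypersurface, of degree bounded in terms of $n$, which does not contain the variety currently under consideration. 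Intersecting with that variety lowers the dimension while, by Bézout, keeping the degree bounded; and --- this is the only point at which the hypothesis of the theorem intervenes --- the resulting section still contains no $\Q$-line, so the construction may be iterated and, after at most $n-2$ steps, terminates at a $0$-dimensional locus of $O_n(1)$ points. Summing over residue classes gives $N(Q,\B) \ll_n p^{n-2} \ll_n V^{(n-2)/n}$.

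\emph{Main obstacle.} The real content is uniformity: the implied constant must depend only on $n$, both in the coefficients of $Q$ and in the proportions of $\B$. Independence of the coefficients is intrinsic to the determinant method, provided one knows that the auxiliary sections genuinely cut down the dimension, which is exactly what the no-$\Q$-line hypothesis guarantees uniformly in $Q$. The delicate case is a very skew box: when some of the $B_i$ greatly exceed the others, a single residue class modulo a prime of size $V^{1/n}$ meets $\B$ in far more than $O_n(1)$ lattice points, and the naive descent collapses. Getting around this forces one to run the descent with moduli and with reduced bases of the pertinent lattices chosen, at each stage, in accordance with the volume of the relevant cross-section of $\B$ rather than with crude coordinate bounds, and then to check that the accumulated estimates still combine to the exponent $\tfrac{n-2}{n}$, with no spurious factor of $\log V$, of $V^{\varepsilon}$, or of the shape of $\B$. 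This balancing, adapted and pushed further from the argument of \cite{BigArticle}, is the heart of the matter.
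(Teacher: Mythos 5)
Your reductions are sound, and the observation via Witt's theorem that a non-singular $\HS$ containing one $\Q$-line has a $\Q$-line through \emph{every} rational point (so that $N_1(Q,\B)=0$ in that case) is a legitimate shortcut the paper does not use. But the main argument has gaps that are not details to be ``checked'' at the end --- they are the entire content of the theorem, since the bound $V^{(n-2)/n}$ with spurious factors of $V^{\epsilon}$ or $\log V$ is already Theorem 2 of \cite{BigArticle}.

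First, a single auxiliary prime $p$ cannot drive the determinant method. The step that forces high powers of $p$ into the monomial determinants is a second-order Taylor expansion around the common reduction, and it requires that reduction to be a smooth point of the quadric mod $p$, i.e.\ $p \nmid \nabla Q(\x)$; mere ``rank at most $1$ mod $p$'' yields only $p^{k-1}$ dividing a $k\times k$ determinant, which is far too weak. For any fixed $p$ there is a whole sublattice of points violating $p\nmid\nabla Q(\x)$, and the classical fix --- letting $p$ range over a dyadic interval so that each point survives for some $p$ --- is precisely where the logarithm enters in \cite{BigArticle}. The paper's central idea is to turn this failure into a constraint: it stratifies points by the smallest odd prime $p_m$ not dividing $\nabla Q(\x)$, notes that such points lie in a lattice $\L(m)$ whose determinant is divisible by $q_m=p_2\cdots p_{m-1}$, and wins because the series $\sum_m p_m^{n(n-2)}q_m^{-(n-2)/n}$ converges. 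Your sketch contains no substitute for this mechanism. Second, you correctly identify the skew-box obstruction but offer no solution; the paper's solution is concrete --- rescale $\B$ to a hypercube of side $V$ at the cost of working in a lattice of determinant $V^{n-1}$, then induct on $n$ through the codimension-one sublattices produced by Lemma \ref{key}, summing dyadically over their determinants via (\ref{c1}) --- and the exponent $\frac{n-2}{n}$ only emerges from that bookkeeping. Finally, the claim that each residue class terminates in $O_n(1)$ points after $n-2$ hypersurface sections is unsupported: the no-line hypothesis does not trivialize the curve stage, since the sections may contain conics, and the uniform conic bound (Theorem 6 of \cite{BigArticle}, the paper's $n=3$ base case) is itself a substantial input that your descent would still have to invoke rather than bypass.
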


Theorem \ref{1.2} is a refinement of \cite[Theorem 2]{BigArticle} in the particular case of quadratic forms, as we get rid of all the logarithmic factors. 
Combined with an estimation of the contribution from the lines separately, it will prove a vital tool in the proof of Theorem \ref{bhb}.

\section{Preliminaries}

In order to estimate the counting functions we are interested in, we shall need to deal with various sublattices of $\Z^n$. 
We therefore introduce for any such lattice $\L$, and $B_1,\dots , B_n \ge 1$
$$P(Q,\B,\L)=\{[\x] : \x \in \L, Q(\x)=0, \forall i |x_i| \le B_i \},$$
where $[\x]$ is the image in $\P^{n-1}(\R)$ of $\x$. We also define
$$S(Q,\B,\L)=\left\{\x \in \L :  c_{\L}(\x)=1, Q(\x)=0, \forall i |x_i| \le B_i \right\},$$
where $c_{\L}(\x)$ denotes the gcd of the coefficients of $\x$ in any base of $\L$, and
$$Z(Q,\B,\L)=\{\x \in \L :  \gcd(x_1, \dots, x_n)=1, \forall i |x_i| \le B_i \}.$$
There is an obvious 2-to-1 correspondence between $S(Q,\B,\L)$ and $P(Q,\B,\L)$, and when $\L=\Z^n$ we have $Z(Q,\B,\L)=S(Q,\B,\L)$.
However in general there is only an inclusion of $Z(Q,\B,\L)$ into $S(Q,\B,\L)$.
When $B_1= \dots =B_n=B$, we shall denote these sets by $P(Q,B,\L)$, $S(Q,B,\L)$ and $Z(Q,B,\L)$, respectively. 

We begin with recalling some useful facts about lattices. Given any lattice $\L \subset \Z^n$ of dimension $r$, its \textit{determinant} is the $r$-dimensional volume of any fundamental parallelepiped of
 $\L$. If $M$ is the $n \times r$ matrix whose columns are the vectors of any basis of $\L$, then we have 
 $$(\det \L)^2=\det(M{}^TM).$$
 It follows that if $\L_1 \subset \L_2$ have the same dimension then $\det \L_2 \mid \det \L_1$.
 
 Let $\m_1$ be any shortest non-zero vector in $\L$, and for $i < r$, define $\m_{i+1}$ as any shortest vector in $\L$ not contained in the span of $\m_1, \dots, \m_i$. We obtain a
  so-called \textit{minimal basis} $\m_1, \dots, \m_r$ of $\L$, and the corresponding $s_i=\|\m_i\|$ are the \textit{successive minima} of $\L$ with respect to the Euclidean length. Then we have 
 \begin{equation}
 \prod_{i=1}^r s_i \ll \det \L \le \prod_{i=1}^r s_i.
 \end{equation}
 In addition, we have the following useful lemma (see \cite[Lemma 5]{Davenport}).
 \begin{lemma}\label{basis}
 Let $\L \subset \Z^n$ be a lattice of dimension $r$, with successive minima $s_1 \le \dots \le s_r$ and let $\m_1, \dots ,\m_r$ be a minimal basis of $\L$. Then for any 
 $$\x = \sum_{i=1}^r \lambda_i \m_i \in \L,$$
 we have $\lambda_i \ll \frac{\|\x\|}{s_i}$ for all $i \le r$.
 \end{lemma}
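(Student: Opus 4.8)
The plan is to work with the Gram--Schmidt orthogonalisation $\m_1^*,\dots,\m_r^*$ of the minimal basis, writing $\m_k=\m_k^*+\sum_{j<k}\mu_{kj}\m_j^*$ with $\mu_{kj}=\langle\m_k,\m_j^*\rangle/\|\m_j^*\|^2$. Substituting into $\x=\sum_i\lambda_i\m_i$ and collecting the coefficient of each $\m_j^*$ gives
$$\x=\sum_{j=1}^r\Bigl(\lambda_j+\sum_{k>j}\lambda_k\mu_{kj}\Bigr)\m_j^*,$$
so, by orthogonality of the $\m_j^*$,
$$\Bigl|\lambda_j+\sum_{k>j}\lambda_k\mu_{kj}\Bigr|\le\frac{\|\x\|}{\|\m_j^*\|}\qquad(1\le j\le r).$$
Everything then reduces to two auxiliary estimates, both with constants depending only on $r$ (hence only on $n$): first $\|\m_j^*\|\gg s_j$, and second $|\mu_{kj}|\ll s_k/s_j$.

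The first estimate is where the minimality of the basis enters, via inequality (2.1). Since $\prod_{j=1}^r\|\m_j^*\|=\det\L$, $\|\m_j^*\|\le\|\m_j\|=s_j$, and $\det\L\gg\prod_j s_j$ by the left-hand side of (2.1), we obtain
$$\|\m_i^*\|=\frac{\det\L}{\prod_{j\ne i}\|\m_j^*\|}\ge\frac{\det\L}{\prod_{j\ne i}s_j}\gg\frac{\prod_j s_j}{\prod_{j\ne i}s_j}=s_i.$$
The second estimate then follows from Cauchy--Schwarz: $|\mu_{kj}|\le\|\m_k\|/\|\m_j^*\|=s_k/\|\m_j^*\|\ll s_k/s_j$.

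With these in hand I would finish by downward induction on $j$. For $j=r$ the bracket is just $\lambda_r$, so $|\lambda_r|\le\|\x\|/\|\m_r^*\|\ll\|\x\|/s_r$. If $|\lambda_k|\ll\|\x\|/s_k$ for every $k>j$, then each cross term obeys $|\lambda_k\mu_{kj}|\ll\frac{\|\x\|}{s_k}\cdot\frac{s_k}{s_j}=\frac{\|\x\|}{s_j}$; there are fewer than $r$ of them, so the triangle inequality applied to the displayed inequality yields $|\lambda_j|\ll\|\x\|/s_j$, as required.

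The only real obstacle is the bound $\|\m_j^*\|\gg s_j$: it is the sole place where the defining property of a minimal basis (through Minkowski's second theorem, packaged here as (2.1)) is genuinely used, and without it the orthogonal decomposition carries no information about the size of the $\lambda_j$. One could instead argue via the dual basis $\m_1^\vee,\dots,\m_r^\vee$ of $\m_1,\dots,\m_r$ inside their common span, using $\lambda_i=\langle\m_i^\vee,\x\rangle$ to reduce the lemma to $\|\m_i^\vee\|\ll 1/s_i$; but establishing that bound relies on the same reduced-basis input, so the Gram--Schmidt argument above seems the cleanest self-contained route.
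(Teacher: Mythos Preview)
Your argument is correct. The paper itself does not supply a proof of this lemma; it merely quotes it as \cite[Lemma~5]{Davenport}. Your Gram--Schmidt approach is a standard and clean way to establish the result, and all the steps go through as written.

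One small point of precision: you write $\prod_{j=1}^r\|\m_j^*\|=\det\L$, but the product on the left is the volume of the parallelepiped spanned by $\m_1,\dots,\m_r$, which is $\det\L$ only if these vectors form a $\Z$-basis of $\L$. The paper's terminology ``minimal basis'' suggests this is being assumed, but in general (for $r\ge 5$) the vectors realising the successive minima need only generate a finite-index sublattice, giving $\prod_j\|\m_j^*\|\ge\det\L$. Either way your chain of inequalities
\[
\|\m_i^*\|=\frac{\prod_j\|\m_j^*\|}{\prod_{j\ne i}\|\m_j^*\|}\ge\frac{\det\L}{\prod_{j\ne i}s_j}\gg s_i
\]
is unaffected, so the argument stands.
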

 
 The next result, which appears in the proof of Theorem 4 in \cite{density}, shows that we can restrict attention to quadratic forms $Q$ of height bounded in terms of $B$.
 \begin{lemma}\label{trick}
 Let $Q \in \Z[x_1, \dots, x_n]$ be a quadratic form and denote by $\|Q\|$ the maximum of the absolute values of the coefficients of $Q$. 
 Then for any $B_1 \le \dots \le B_n$ there exists a quadratic form $G \in \Z[x_1, \dots, x_n]$ with 
 $\|G\| \ll B_n^{n(n+1)}$ such that $P(Q,\B,\Z^n) \subset P(G,\B,\Z^n)$.
 \end{lemma}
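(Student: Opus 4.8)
The plan is to exploit the fact that, although the coefficients of $Q$ may be arbitrarily large, the points of $P(Q,\B,\Z^n)$ impose only boundedly many linear conditions on the $\binom{n+1}{2}$-dimensional space of quadratic forms, so that a Siegel's lemma argument produces a low-height form $G$ vanishing at all of them. I would set $M=\binom{n+1}{2}=\tfrac{n(n+1)}{2}$ and identify a quadratic form $G=\sum_{1\le k\le l\le n}g_{kl}x_kx_l$ with its coefficient vector $(g_{kl})\in\Q^{M}$, so that $\|G\|=\|(g_{kl})\|_\infty$ and integral forms correspond to $\Z^{M}$. For each point of $P(Q,\B,\Z^n)$ fix a representative $\x\in\Z^n$ with $Q(\x)=0$ and $|x_i|\le B_i$ for all $i$, and consider the evaluation functional $L_{\x}\colon\Q^{M}\to\Q$, $G\mapsto G(\x)$; in the coordinates above $L_{\x}$ has integer entries $x_kx_l$, all of absolute value at most $B_n^{2}$ since $B_1\le\dots\le B_n$.

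One may assume $Q\neq0$ and $P(Q,\B,\Z^n)\neq\varnothing$, the remaining cases being trivial (take for instance $G=x_1^{2}$). Let $W\subseteq\Q^{M}$ be the subspace of all quadratic forms vanishing at every point of $P(Q,\B,\Z^n)$, i.e. $W=\bigcap_{[\x]}\ker L_{\x}$. By construction $Q\in W$, so $\dim W\ge1$, and hence the span of the functionals $\{L_{\x}:[\x]\in P(Q,\B,\Z^n)\}$ has some dimension $r$ with $1\le r\le M-1$. Fix points $[\x^{(1)}],\dots,[\x^{(r)}]\in P(Q,\B,\Z^n)$ for which $L_{\x^{(1)}},\dots,L_{\x^{(r)}}$ form a basis of that span; since these span all of the $L_{\x}$, any quadratic form annihilated by $L_{\x^{(1)}},\dots,L_{\x^{(r)}}$ lies in $W$.

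It then remains to apply Siegel's lemma to the homogeneous system $L_{\x^{(j)}}(G)=0$, $1\le j\le r$: it consists of $r$ linear equations in $M>r$ integer unknowns with coefficients bounded by $B_n^{2}$ in absolute value, so it has a nonzero integral solution $G$ with
$$\|G\|\ll_n\bigl(B_n^{2}\bigr)^{r/(M-r)}\le B_n^{2(M-1)}=B_n^{n(n+1)-2}\le B_n^{n(n+1)},$$
using $B_n\ge1$. Such a $G$ is a nonzero integral quadratic form of the required height lying in $W$, hence $G(\x)=0$ for every $[\x]\in P(Q,\B,\Z^n)$; as the representatives we chose satisfy $|x_i|\le B_i$, we conclude $P(Q,\B,\Z^n)\subseteq P(G,\B,\Z^n)$.

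The only point requiring care — and the main obstacle — is the reduction in the second paragraph: a priori $P(Q,\B,\Z^n)$ may contain on the order of a power of $B_n$ points, so one cannot impose all the conditions $L_{\x}(G)=0$ simultaneously in a way compatible with Siegel's lemma; the saving observation is that, since all of these points lie on the \emph{single} fixed quadric $Q=0$, the functionals $L_{\x}$ span a space of dimension at most $M-1$ regardless of $\B$, so they may be replaced by a sub-basis of bounded size. (Equivalently one may take $G$ to be a shortest nonzero vector of the integral lattice $\Z^{M}\cap W$, which has rank $\ge1$, and bound its determinant by Hadamard's inequality applied to those $r$ functionals; this gives the same exponent, so the bound $n(n+1)$ in the statement has room to spare, $n(n+1)-2$ already being enough.)
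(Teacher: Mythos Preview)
Your argument is correct and is precisely the approach the paper has in mind: the lemma is not proved in the paper but is attributed to the proof of Theorem~4 in Heath-Brown's \emph{The density of rational points on curves and surfaces}, where exactly this Siegel's lemma argument is carried out (choose $r\le M-1$ independent linear conditions among the evaluations $L_{\x}$, then find a short integral solution). Your exponent $n(n+1)-2$ even slightly improves on the stated $n(n+1)$, which is harmless since only an upper bound polynomial in $B_n$ is needed. The only cosmetic slip is in the throwaway clause about trivial cases: $G=x_1^{2}$ handles $P(Q,\B,\Z^n)=\varnothing$ but not $Q=0$; however, in the paper $Q$ is always a genuine (nonzero) quadratic form, so this edge case never arises.
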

 
 \section{Proof of Theorem \ref{1.2}}\label{pt1}
 
 We proceed by induction on $n$.
 The base case $n=3$ is Theorem 6 of \cite{BigArticle}. Note that if $Q$ is singular, then all points lie on a line, therefore in this case the claim of Theorem \ref{1.2} is automatically verified. 

 Consider now $n \ge 4$ and assume $Q$ is a fixed non-singular integral quadratic form in $n$ variables.
 Let $B_1 \le B_2 \le \dots \le B_n$ (this condition is not restrictive since we can permute variables) and $V=\prod_{i=1}^n B_i$.
 We may assume without loss of generality that the $B_i$'s are powers of two.
By Lemma \ref{trick} we may furthermore assume without loss of generality that $Q$ is primitive and $\|Q\| \le B_n^{n(n+1)}$. 
 
 For any prime $p$ we introduce 
 $$S(Q,\B,\L,p) = \{\x \in S(Q,\B,\L) : p \nmid \nabla Q(\x) \}$$
 and
 $$Z(Q,\B,\L,p) = \{\x \in Z(Q,\B,\L) : p \nmid \nabla Q(\x) \}.$$
 We will strongly rely on the following result from \cite[Lemma 8]{BigArticle}.
 
 \begin{lemma} \label{key}
 Let $\mathcal B \ge 1$ and $\L \subset \Z^n$ be a lattice of dimension $r \ge 2$, with largest successive minimum $\ll \mathcal B$. 
 Let $p$ be a prime not dividing $\det \L$. 
 Then there is an integer $I \ll p^{r(r-2)}\left(\frac{ \mathcal B^r}{\det \L}\right)^{\frac{r-2}r}$ and lattices $\L_1, \dots, \L_I \subset \L$ of dimension $r-1$ such that
 $$S(Q, \mathcal B,\L,p) \subset \bigcup_{i=1}^I \L_i.$$
 For any $i$, the successive minima of $\L_i$ are all $O(p^r \mathcal B)$. Moreover for any $\alpha > 0$ we have 
 \begin{equation}
 \label{c1}
 \#\{ i : \det \L_i \le \alpha \det \L\} \ll p^{r-2}(\alpha \mathcal B)^{\frac{r-2}r}.
 \end{equation}
 \end{lemma}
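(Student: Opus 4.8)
The plan is to run a determinant-method argument organised around reduction modulo $p$. Since $p\nmid\det\L$ we have $\L/p\L\cong\mathbb F_p^r$, and every $\x\in S(Q,\mathcal B,\L,p)$ is primitive, so its reduction $\bar\x$ is a nonzero point of the quadric $\bar Q=0$ cut out by $Q$ on $\P(\L/p\L)\cong\P^{r-1}(\mathbb F_p)$; the hypothesis $p\nmid\nabla Q(\x)$ guarantees that $\bar\x$ is a \emph{nonsingular} point. The number of nonsingular $\mathbb F_p$-points on a quadric in $\P^{r-1}$ is $\ll p^{r-2}$, so I would first partition $S(Q,\mathcal B,\L,p)$ into $\ll p^{r-2}$ classes according to the value of $\bar\x$, and then show that the points in each class are covered by a controlled number of $(r-1)$-dimensional sublattices of $\L$.

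The engine for each class is that points sharing a reduction are \emph{$p$-adically concentrated}. If $\x\equiv\y\pmod p$ both lie on $Q=0$, then writing $\y=\x+p\bm z$ and expanding $0=Q(\y)=Q(\x)+\nabla Q(\x)\cdot(p\bm z)+Q(p\bm z)$ gives $\nabla Q(\x)\cdot\bm z=-pQ(\bm z)$; in particular the coordinate vectors, in a fixed basis of $\L$, of any points of one class are mutually congruent modulo $p$. Consequently the determinant $D$ of any $r$ such points (computed in that basis) is a nonzero integer divisible by $p^{r-1}$ whenever the points are independent, while the covolume $|D|\det\L$ of the spanned sublattice is $\ll\mathcal B^r$ by Hadamard, since each point lies in the box of size $\mathcal B$. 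Balancing $p^{r-1}\le|D|\ll\mathcal B^r/\det\L$ is exactly what forces the points of a class onto few rational hyperplanes, i.e. into few $(r-1)$-dimensional sublattices $\L_i=\L\cap H_i$, and this is where the factors $p^{\,\cdot}$ and $(\mathcal B^r/\det\L)^{\,\cdot}$ are born.

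To make this quantitative I would, inside each class, pass to a minimal basis (Lemma \ref{basis}) of the lattice generated by the box-points of that class and peel off the hyperplane sublattices greedily; the inequalities $\prod_i s_i\ll\det\L\le\prod_i s_i$ then let me bound the number of $(r-1)$-dimensional pieces produced by a volume count of shape $(\mathcal B^r/\det\L)^{(r-2)/r}$. Multiplying by the $\ll p^{r-2}$ classes together with the powers of $p$ coming from the $p^{r-1}$-divisibility yields $I\ll p^{r(r-2)}(\mathcal B^r/\det\L)^{(r-2)/r}$. For the bound $O(p^r\mathcal B)$ on the successive minima I would take each $\L_i$ to be the \emph{saturation} in $\L$ of the lattice spanned by the relevant box-points: this is generated by vectors of length $\ll\mathcal B$ up to an index that is $\ll p^{r}$, and by Lemma \ref{basis} passing to the saturation inflates the minima by at most that index.

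Finally, for the refined count \eqref{c1} I would observe that $\det\L_i\le\alpha\det\L$ forces $\L_i$ to be rich, i.e. to contain comparatively many points of the box, and bound the number of such rich lattices by summing the per-lattice counts $\ll\mathcal B^{r-1}/\det\L_i$ against the total and invoking the successive-minima inequalities once more, producing the factor $(\alpha\mathcal B)^{(r-2)/r}$. The main obstacle will be the simultaneous bookkeeping in the determinant step: extracting an honest drop in dimension to $r-1$ from purely $p$-adic congruence information, while keeping the number of covering lattices down to the stated power of $p$, and at the same time tracking both the successive minima and the small-determinant count \eqref{c1}. The delicate quantitative heart is the interplay between the Hadamard bound $|D|\ll\mathcal B^r$ and the divisibility $p^{r-1}\mid D$, which dictates precisely when independence is impossible and hence how many hyperplanes are genuinely needed.
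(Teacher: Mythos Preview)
The paper does not give a self-contained proof of this lemma; it quotes it as \cite[Lemma~8]{BigArticle} and only sketches the Browning--Heath-Brown argument in the paragraph following the Remark. That sketch, together with the Remark, differs from your outline in one essential respect, and the difference is where your proposal has a genuine gap.

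The Browning--Heath-Brown construction works modulo $p^k$ for a \emph{range} of exponents $k$. To each projective point $[\x]\pmod{p^k}$ on the quadric one attaches, via the second-order Taylor expansion you wrote down, a full-rank sublattice $M_{[\x]}\subset\L$; one then increases $k$ until the largest successive minimum of $M_{[\x]}$ exceeds $\mathcal B$, and only at that ``depth'' does one drop to the span of the first $r-1$ minimal vectors to produce an $\L_i$. The bookkeeping recorded in the Remark is that there are at most $p^{k(r-2)}$ lattices of depth $k$, while each satisfies $\det\L_i\ge p^{(k-1)r}\mathcal B^{-1}\det\L$. Summing over $k$ gives the bound on $I$, and for \eqref{c1} the inequality $\det\L_i\le\alpha\det\L$ forces $p^k\ll p(\alpha\mathcal B)^{1/r}$, whence at most $p^{r-2}(\alpha\mathcal B)^{(r-2)/r}$ lattices.

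Your outline works only modulo $p$. With a single level of reduction you obtain $\ll p^{r-2}$ classes, and within each class your Hadamard/divisibility balance $p^{r-1}\le |D|\ll\mathcal B^r/\det\L$ is far too weak to force dependence: it says nothing unless $p^{r-1}$ exceeds $\mathcal B^r/\det\L$, which is typically false (one applies the lemma with $p$ small). The equation $\nabla Q(\x)\cdot\bm z=-pQ(\bm z)$ you derived is exactly the seed of the iteration to depth $2$ and beyond, but you stop after recording it. The phrases ``peel off the hyperplane sublattices greedily'' and ``a volume count of shape $(\mathcal B^r/\det\L)^{(r-2)/r}$'' are precisely where the passage to higher powers of $p$ is hiding, and you have not supplied it. Similarly, your argument for \eqref{c1} via ``rich lattices contain many box-points'' does not work as stated: smallness of $\det\L_i$ does not by itself bound the number of such $\L_i$; the correct argument goes through the depth parameter $k$ as above. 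In short, you have the right local ingredient (Taylor expansion and the tangent-hyperplane congruence), but the missing idea is the adaptive $p$-adic depth, without which neither the exponent $r(r-2)$ on $p$ in the bound for $I$ nor the refined estimate \eqref{c1} can be reached.
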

 
 \begin{remark}
 What is denoted by $S(Q, \mathcal B,\L)$ in \cite{BigArticle} is what we denote here by $Z(Q, \mathcal B,\L)$. 
 Thus it might seem that we should have replaced $S(Q, \mathcal B,\L,p)$ with $Z(Q, \mathcal B,\L,p)$ in the above statement.
 However, going through the proof, one can ensure this is what is proven in \cite{BigArticle}. 
 Indeed, the authors proceed by sending the lattice $\L$ to $\Z^n$, then to embed the primitive points on the accordingly transformed quadric into a collection
 of sublattices, and eventually take back the image of these sublattices in $\L$. The point is that not only vectors from $Z(Q, \mathcal B,\L)$, but actually from 
 $S(Q, \mathcal B,\L)$ (with our notation), have an image in $\Z^n$ that is primitive.
 Another slight difference is that in the original formulation of  \cite{BigArticle}, it is assumed that $\alpha \ge 1$. However, here again this condition can be relaxed.
 The construction is such that to each lattice $\L_i$ is assigned a "depth" $k$. Moreover, there are at most $p^{k(r-2)}$ lattices of depth $k$, and 
 $\L_i$ has determinant at least $p^{(k-1)r} \mathcal B^{-1}\det \L$. Thus, for any $\alpha > 0$, $\det \L_i \le \alpha \det \L$ implies $p^k \ll p (\alpha \mathcal B)^{1/r}$. 
 Therefore there are at most $p^{r-2} (\alpha \mathcal B)^{\frac{r-2}r}$ such lattices $\L_i$.

 \end{remark}
 
 The general idea of the proof of this lemma is to consider the reduction of $Q=0$ modulo $p^k$ for some appropriate exponents $k$, 
 and to perform a Taylor expansion of order $2$ in order to embed the lifts in $\L$ of every projective point $[\x]$ modulo $p^k$ in a corresponding lattice $M_{[\x]} \subset \L$ of dimension $r$. 
 Moreover, for $k$ big enough these lattices $M_{[\x]}$ have largest successive minimum $\gg \mathcal B$.
 Hence, by Lemma \ref{basis}, under the assumption that $|\x| \le \mathcal B$, one can restrict to the sublattice of $M_{[\x]}$ generated by the first $r-1$ vectors of a minimal basis. 
 On the other hand, the larger $k$ is, the more points modulo $p^k$ have to be considered. Thus one has to choose $k$ carefully.
   
  In order to come down from $S(Q, \mathcal B,\L)$ to $S(Q,\mathcal B,\L,p)$ one has to find a set of primes $p$ in a way that ensures 
  that for each $\x \in S(Q, \mathcal B,\L)$, at least one of these $p$ does not divide $\nabla Q(\x)$.
  To this end, $p$ has to range in some interval depending on $\|Q\| \mathcal B$, and that is where the logarithmic factors enter into the picture in \cite{BigArticle}. 
  In our work, the main idea is to exploit the situation $p \mid \nabla Q(\x)$ as an additional constraint on $\x$.

 For any positive integer $m$, let $p_m$ denote the $m$-th prime number. 
 For simplicity, in the sequel we shall use the notation $q_m = p_2 \dots p_{m-1}$ for the product of the first $m-2$ odd primes. 
 For any lattice $\L \subset \Z^n$, denote by 
 $$\L(m) = \{ \x \in \L : q_m \mid \nabla Q(\x) \}.$$
 Since $Q$ is quadratic, $\nabla Q$ is linear and $\L(m)$ is a lattice. 
 Let $M$ be the half-integer symmetric matrix corresponding to $Q$, so that $Q(\x)={}^T\x M \x$. 
 Write $2M$ under Smith normal form: $2M=SDT$ with $S$ and $T$ unimodular,
 and $D=\diag(d_i)$,with  $d_i \mid d_{i+1}$ for $1 \le i \le n$. Hence $d_1 \in \{1,2\}$ since $Q$ is primitive.
 
 \begin{lemma}\label{deter}
 Let $b_1, \dots ,b_n$ be powers of two, $\L_{\b}= \prod_{i=1}^n (b_i \Z)$, and $v=\prod_{i=1}^n b_i$. 
 Then $$\det \L_{\b}(m) = v \prod_{i=1}^n a_i$$
 with $a_i = \frac{q_m}{\gcd(q_m,d_i)}$.
 In particular, it is divisible by $q_m$ since $a_1=q_m$.
 \end{lemma}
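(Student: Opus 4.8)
The plan is to realize $\L_{\b}(m)$ as an intersection of two full-rank sublattices of $\Z^n$ whose indices in $\Z^n$ are coprime, to compute each index separately, and to multiply. Since $Q$ is quadratic we have $\nabla Q(\x) = 2M\x = SDT\x$; as $S$ is unimodular, $S^{-1}$ is again an integer matrix, so multiplication by it neither creates nor destroys divisibility by $q_m$, and hence the condition $q_m \mid \nabla Q(\x)$ is equivalent to $q_m \mid DT\x$, that is, to $q_m \mid d_i (T\x)_i$ for every $i$, that is, to $a_i \mid (T\x)_i$ for every $i$, where $a_i = q_m/\gcd(q_m, d_i)$.

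First I would treat the case $\b = (1,\dots,1)$, i.e.\ the lattice $\Z^n(m) = \{\x \in \Z^n : q_m \mid \nabla Q(\x)\}$. Setting $\y = T\x$, which is an automorphism of $\Z^n$ since $T$ is unimodular, the reduction above shows $\Z^n(m) = T^{-1}\big(\prod_{i=1}^n a_i \Z\big)$, whence $\det \Z^n(m) = [\Z^n : \Z^n(m)] = \prod_{i=1}^n a_i$. For general powers of two $b_1, \dots, b_n$ one has $\L_{\b}(m) = \L_{\b} \cap \Z^n(m)$, an intersection of sublattices of $\Z^n$ of indices $[\Z^n : \L_{\b}] = v$ and $[\Z^n : \Z^n(m)] = \prod_i a_i$. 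Because $v$ is a power of two while each $a_i$ divides the odd integer $q_m$, these two indices are coprime, so I would invoke the elementary fact — essentially the Chinese Remainder Theorem applied to $\Z^n/\L_{\b}(m)$ through the injection $\x \mapsto (\x + \L_{\b},\, \x + \Z^n(m))$, together with the divisibility of $[\Z^n:\L_{\b}(m)]$ by each of $v$ and $\prod_i a_i$ — that coprime indices multiply under intersection. This yields $\det \L_{\b}(m) = [\Z^n : \L_{\b}(m)] = v \prod_{i=1}^n a_i$.

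Finally, since $Q$ is primitive we have $d_1 \in \{1,2\}$, and $q_m$ is a product of odd primes, so $\gcd(q_m, d_1) = 1$ and $a_1 = q_m$; hence $q_m \mid \det \L_{\b}(m)$. I do not anticipate a genuine obstacle: the only points requiring a little care are fixing the convention $\nabla Q(\x) = 2M\x$ so that it is the Smith normal form of $2M$ (and not of $M$) that one diagonalizes, and checking the coprimality-multiplies-indices statement; the remainder is routine lattice bookkeeping.
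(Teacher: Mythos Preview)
Your proof is correct and follows essentially the same route as the paper: both write $\L_{\b}(m)=\L_{\b}\cap\Z^n(m)$, compute $\Z^n(m)=T^{-1}\bigl(\prod_i a_i\Z\bigr)$ via the Smith normal form of $2M$, and then exploit the coprimality of the odd index $\prod_i a_i$ with the power-of-two index $v$. The only cosmetic difference is in the upper bound for $[\Z^n:\L_{\b}(m)]$: the paper exhibits the explicit sublattice $\phi(\Z^n(m))\subset\L_{\b}(m)$ of index $v\prod_i a_i$ in $\Z^n$, whereas you use the injection $\Z^n/\L_{\b}(m)\hookrightarrow(\Z^n/\L_{\b})\times(\Z^n/\Z^n(m))$; both give $[\Z^n:\L_{\b}(m)]\mid v\prod_i a_i$ and the arguments are interchangeable.
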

 
 \begin{proof}
 Write $\L_{\b}(m)=\Z^n(m) \cap \L_{\b}$. Obviously, $\L_{\b}$  has determinant $v$. 
 It follows that 
 $\lcm(\det \Z^n(m), v) \mid \det \L_{\b}(m).$ We have
 \begin{align*}
 \Z^n(m) &=  \{ \x \in \Z^n : q_m \mid \nabla Q(\x) \} \\
 				 &=  \{ \x \in \Z^n : q_m \mid SDT\x \} \\
 				 &=  \{ T^{-1}\y : \y \in T\Z^n : q_m \mid SD \y \} \\
 				 &= \{ T^{-1}\y : \y \in \Z^n : q_m \mid D \y \}
 \end{align*}
 since $S,T$ are unimodular. Therefore a basis for $\Z^n(m)$ is given by 
 $$T^{-1}\left(a_1 \e_1, \dots, a_n \e_n\right),$$ 
 where $(\e_1,  \dots , \e_n)$ is the canonical basis of $\Z^n$. 
 In particular 
 $$\det \Z^n(m) = \prod_{i=1}^n a_i$$
  and the above least common multiple is the product of both arguments since $v$ is a power of two and $\det \Z^n(m)$ is odd. 
 On the other hand, $\L_b(m)$ contains the image of $\Z^n(m)$ by \begin{align*}
 \phi: \Z^n & \to \Z^n \\
 \x & \mapsto \left(b_1x_1, \dots ,b_nx_n\right),
 \end{align*}
  so $\det \L_b(m) \mid v \prod_{i=1}^n a_i$ and we are done.
 \end{proof}
 
 For any non-singular point $\x$ on the hypersurface $\HS$, let $m \ge 2$ such that $p_m$ is the smallest odd prime not dividing $\nabla Q(\x)$. 
 Then $\x \in \Z^n(m)$ and we can write 
 $$Z(Q,\B,\Z^n)=\bigcup_m Z(Q,\B,\Z^n(m),p_m).$$
 Note that if $Z(Q,\B,\Z^n(m),p_m)$ is non empty then there exists a primitive vector $\x$ with $|\x| \le B_n$ and $q_m \le |\nabla Q(\x)| \ll \| Q \| |\x| \ll B_n^{n^2+n+1}$ by assumption.
 Hence we must have $m \ll \log(B_n)$.
 
 Next consider
 \begin{align*}
 \phi: \Z^n & \to \Z^n \\
 \x & \mapsto \left(\frac{V}{B_1}x_1, \dots ,\frac{V}{B_n}x_n\right),
 \end{align*}
 which sends the box $\B$ to the centered hypercube of side length $V$.
 Then for any $\x \in \Z^n$ we have $Q(\x)=0$ if and only if $Q( \phi^{-1}(\y))=0$, where $\y=\phi(\x)$.
 Therefore, the zeros of $Q$ correspond under this linear transformation to those of the form $Q_{\phi}=V^2 Q \circ \phi^{-1}$, 
 where the scaling factor $V^2$ is only there to ensure that the coefficients are integers. 
 Of course, the image by $\phi$ of a primitive point need not be primitive any more.
 However if $\x$ and $\x'$ are two distinct primitive points in the box $\B$ then $\phi(\x)$ and $\phi(\x')$ define two distinct projective points of height at most $V$. 
 Clearly, $\nabla Q_{\phi}(\y)=V^2 \phi^{-1}(\nabla Q(\x))$. Thus for any lattice $\L$ and any odd prime $p$, we have 
 $$\#S(Q,\B,\L,p) \le \#S(Q_{\phi},V,\phi(\L),p).$$
  
 In view of what precedes, in the notation of Lemma \ref{deter} we have 
 $$\# Z(Q,\B,\Z^n) \le \sum_m \#S(Q_{\phi},V,\L_{\b}(m),p_m),$$
 with $b_i=\frac{V}{B_i}$.
 For future use we note that for this choice of $\b$, Lemma \ref{deter} implies 
 \begin{equation} \label{minor}
 \det \L_{\b}(m) \gg V^{n-1} q_m.
 \end{equation}
 For each $m$, two cases can occur: if $\L_{\b}(m)$ has largest successive minimum $\gg V$, 
 then by Lemma \ref{basis} any point of height at most $V$ contained in $\L_{\b}(m)$ is actually contained in 
 the $(n-1)$-dimensional sublattice of $\L_{\b}(m)$ generated by the $n-1$ first vectors of a minimal basis.
 Otherwise Lemma \ref{key} applies with $\L=\L_{\b}(m)$ and $\mathcal B=V$. 
 
In the first case, taking the preimage by $\phi$, each point in the box $\B$ contained in $\Z^n(m)$ is actually contained in some $(n-1)$-dimensional sublattice $\L$ of $\Z^n(m)$. 
It means that one of the coordinates can be expressed as a linear function of the other ones, say $x_1 = \ell(x_2, \dots ,x_n)$. 
Furthermore, $\ell$ is defined over $\Q$. Therefore for some integer $d$, the form $Q_\ell=dQ(\ell(x_2, \dots ,x_n),x_2, \dots ,x_n)$ has integral coefficients. 
Then 
we have
\begin{align*}
\#S(Q,\B,\L, p) &\le \#S(Q,\B,\Z^n)\\
								&\le \# S(Q_\ell,\B',\Z^{n-1}),
\end{align*}
where $\B'$ is the box  with sides of length $B_2, \dots, B_n$.
By the induction hypothesis, the number of points not contained in a line coming from $\Z^n(m)$ is in this case $ \ll (B_2 \dots B_n)^{\frac{n-3}{n-1}}$.
Since $m \ll \log(B_n)$, the total contribution of this case is $$\ll(B_2 \dots B_n)^{\frac{n-3}{n-1}} \log(B_n) \ll (B_1 \dots B_n)^{\frac{n-2}{n}}$$
 since $B_1 \gg 1$ and $\log(B_n) \ll B_n^{\frac{2}{n(n-1)}}$.
 
 Consider now the second case. 
 Let $\L_i \subset \L_{\b}(m) \subset \Z^n(m)$ be any sublattice of dimension $n-1$ arising from Lemma \ref{key},
 with minimal basis $(\t_1,  \dots ,\t_{n-1})$ and successive minima $s_1, \dots , s_{n-1}$ say. 
 Set 
\begin{align*}
\psi : \Z^{n-1} & \to \L_i \\
\x & \mapsto \sum_{j=1}^{n-1} x_j \t_j.
\end{align*}
By Lemma \ref{basis}, we are left with counting projective points $\x$ of $\Z^{n-1}$ with $x_j \le \frac{V}{s_j}$, such that $Q(\psi(\x))=0$, and not lying on any line.
However, to apply the induction hypothesis, we must ensure that the sides of the box we consider are larger than $1$. 
We know that the successive minima of $\L_i$ are all $ \le  c p_m^nV$ for some constant $c$ depending only on $n$.
We shall rather count those points $\x$ in the larger box $x_j \le \frac{V_0}{s_j}$, where $V_0= c p_m^nV$.
By the induction hypothesis, we find 
\begin{align*}
& \ll  \left(\frac{V_0}{s_1} \dots \frac{V_0}{s_{n-1}} \right)^{(n-3)/(n-1)}\\
& \ll \left(\frac{V_0^{n-1}}{\det \L_i}\right)^{(n-3)/(n-1)}
\end{align*}
such points for each sublattice $\L_i$.
By (\ref{c1}), for any $\alpha > 0$, the lattices $\L_i$ such that
\begin{equation}
\label{dyadic}
\frac{\alpha}{2} \det \L_{\b}(m)< \det \L_i \le \alpha \det \L_{\b}(m)
\end{equation}
yield a total contribution of at most
$$p_m^{n-2}(\alpha V)^{\frac{n-2}n}\left(\frac{V_0^{n-1}}{\alpha \det \L_{\b}(m)}\right)^{(n-3)/(n-1)}.$$
Since 
the successive minima of $\L_i$ are all $\le V_0$, we have 
$$
\det \L_i \ll V_0^{n-1}.$$
Hence for each possible $i$, $\det \L_i$ has to lay in one of the dyadic intervals (\ref{dyadic}) for some $\alpha$ in the 
 range
$$\alpha \ll \frac{V_0^{n-1}}{\det \L_{\b}(m)} \doteq X,$$
say. Summing for each $m$ the contributions of these dyadic intervals, we get at most
$$p_m^{n-2}V^{\frac{n-2}n} X^{\frac{n-3}{n-1}} \sum_{2^k \ll X} 2^{k \left( \frac{n-2}n - \frac{n-3}{n-1}\right) }
\ll p_m^{n-2}V^{\frac{n-2}n}  X^{\frac{n-2}n}.$$
By (\ref{minor}), we have 
$$X \ll \frac{p_m^{n(n-1)}}{q_m},$$
 thus the total contribution from each $m$ is
$$\ll p_m^{n(n-2)}\left(\frac{V}{q_m}\right)^{\frac{n-2}{n}}.$$
But $p_m^{a}q_m^{-b} \ll (m \log m)^a 2^{-b m}$ for any $a, b > 0$ by the prime number  theorem. Hence the series 
\begin{equation} \label{series}
\sum_{m}p_m^{n(n-2)}q_m^{-\frac{n-2}{n}}
\end{equation} converges.
Adding up all these contributions, we therefore get $\ll V^{\frac{n-2}{n}}$ points, which finally establishes Theorem \ref{1.2}. 

\section{Proof of Theorem \ref{bhb}}

We have to add to the estimate given by Theorem \ref{1.2} the contribution of points lying on the various lines included in the surface $Q=0$. 

The argument involved for the case of four variables is quite similar to the last section of \cite{2018},
with a slight modification due to the fact that we do not consider the same collection of planes, so we do not get the same number of lines.
Consider the planes generated by the various sublattices of dimension $3$ that we constructed above.
Recalling that we applied Lemma \ref{key} for each $m$ with $\mathcal B = V$ and $\L=\L_{\b}(m)$,
we have at most 
$$I \ll p_m^{8}\left(\frac{V^4}{ \det \L_{\b(m)}}\right)^{\frac12} \ll p_m^{8}\left(\frac{V}{ q_m}\right)^{\frac12}$$
such hyperplanes. 
Hence using again the fact that the series (\ref{series}) converges, the total number of planes is $O\left(V^{\frac12}\right) =O \left(B^{2} \right)$.
But the intersection of each plane with the surface $\HS$ has at most two irreducible components, which may be a line or a conic.
Hence we have the contribution of at most $N \ll B^{2}$ lines to take into account. 
The integral points on each line $L$ form a certain sublattice $\Lambda_L$ of dimension 2. 
By Lemma \ref{basis}, this lattice contributes $O\left(1+ \frac{B^2}{\det \Lambda_L} \right)$.
But $L$ also corresponds to a point of height $H(L)=\det \Lambda_L$ in the variety $F_1(Q) \subset \G(1,n-1)$ of lines included in $Q=0$.
Order the lines $L_1, \dots, L_N$ by non-decreasing height. Since, by the work of Walsh \cite{Walsh}, $F_1(Q)$ has $O(H)$ points of height at most $H$ with an absolute implied constant, we get that 
$$n \le \# \{L_i : H(L_i) \le H(L_n)\} \ll H(L_n).$$
Therefore the total contribution from the $\mathbb Q$-lines is 
$$\ll \sum_{n \le N} 1 + \frac{B^2}{H(L_n)} \ll N+B^2 \log(N) \ll B^2 \log(B),$$
which establishes Theorem \ref{bhb}. 

 \bibliographystyle{plain}
\bibliography{biblio}

\end{document}